\begin{document}

	\title[Generic uniqueness for the LASSO]{On the uniform generic uniqueness of the LASSO estimator}
	\author{St\'ephane Chr\'etien and S\'ebastien Darses}

	\address{National Physical Laboratory\\ 
		Hampton Road,\\
		Teddington, TW11 0NS, UK} 
	\email{stephane.chretien@npl.co.uk}
	
	\address{LATP, UMR 6632\\
		Universit\'e de Provence, Technop\^ole Ch\^{a}teau-Gombert\\
		39 rue Joliot Curie\\ 13453 Marseille Cedex 13, France}
	\email{darses@cmi.univ-mrs.fr}

	\maketitle
	
	%\vspace{.5cm}
	
	\begin{abstract}
		The LASSO is a variable subset selection procedure in statistical linear regression based 
		on $\ell_1$ penalization of the least-squares operator. Uniqueness of the LASSO 
		is an important issue, especially for the study of the LASSO path. 
		The goal of the present paper is to provide a generic sufficient condition on the 
		design matrix for the LASSO minimizer to be unique. Unlike previous 
		works on the question of uniqueness, our condition only depends on the 
		design matrix. 
		Our study is based on a general position condition on the design 
		matrix which holds with probability one for most experimental models. 
	\end{abstract}
	
	%\tableofcontents

	\section{Introduction}
	
	\subsection{Problem statement and main results}
	The well-known standard Gaussian linear model in statistics reads $y  =  X\beta+z$,
	where $X$ denotes a $n\times p$ design matrix, $\beta\in\R^{p}$ is an unknown parameter and the components of the error $z$ are assumed i.i.d. with normal distribution $\mathcal N(0,\sigma^2)$.  
	
	The case where $p$ is much larger than $n$ has been the subject of an intense recent study. This problem 
	is of course not solvable for any $\beta$ but it has been discovered that if $\beta$ is sufficiently sparse, then the solution of 
	\bea
	\label{lasso}
	\qquad \ch{\beta}_\lb & \in & \down{b\in \mathbb R^p}{\rm argmin} \ \frac12\|y-X b\|_2^2+\lb \|b\|_1,
	\eea
	called the LASSO estimator of $\beta$, is sometimes also sparse and close to $\beta$. The acronym LASSO, due to \cite{Tibshirani:JRSSB96}, stands for Least Absolute Shrinkage and Selection Operator, 
	and stems from the fact that the $\ell_1$-norm penalty shrinks the components of the standard least-squares 
	estimator $\ch{\beta}$. Some components are shrinked to the point of setting them to zero, hence implying 
	automatic selection of the remaining nonzero components as good predictors for the experiments under study. We refer the interested 
	reader to \cite{Candes:ActaNum06} and \cite{delBarrio:EMS07} for an overview of the relationships between sparsity and statistics, and 
	sparsity promoting penalizations of the least-squares criterion. Important  results concerning the LASSO and extensions to other statistical models and penalizations strategies may be found in \cite{Bunea:EJStat07}, \cite{Bickel:AnnStat09}, \cite{CandesPlan:AnnStat09}
	and \cite{vandeGeer:AnnStat10} for instance. We refer to 
	\cite{Hastie:SparsityBook15} for a thourough overview of the field.
	
	Under the assumption that the columns of $X$ are sufficiently "uncorrelated", several authors were able to prove that, 
	with high probability, the $\ell_2$-norm of $X(\ch{\beta}-\beta)$ is of the same order of magnitude as 
	the $\ell_2$-norm of $X(\tilde{\beta}-\beta)$ for an oracle $\tilde{\beta}$. It may even perform as well as an "oracle". For instance, the oracle proposed in \cite{CandesPlan:AnnStat09} is a solution of 
	\bean
	\label{racle}
	\qquad \tilde{\beta}_\lb & \in & \down{b\in \mathbb R^p, \ b_{T^c}=0}{\rm argmin} \ \frac12\|y-X_T b_T\|_2^2+\lb\ \sgn(\beta_T) ^tb_T,
	\eean
	where $T$ is the index set of the non-zero components of $\beta$.
	The term "oracle" is often used to emphasize that the support of $\beta$ is usually unknown ahead of time. 
	Under stronger assumptions it was further proven in \cite{Bunea:EJS08} and \cite{CandesPlan:AnnStat09} that the support and sign pattern of $\beta$ can be recovered exactly with high probability. The case where the variance 
	is unknown was treated in e.g. \cite{Baraud:StatScience12},
	\cite{ChretienDarses:IEEEIT14}. A very efficient algorithm, based on Nesterov's method, for 
	solving the LASSO estimation problem is described in \cite{Becker:SIAMImSc10}.
	
	One important property expected from the LASSO is the uniqueness of the 
	solution and continuity of the path (to say the least). Such properties
	used early in the study of the LASSO like for the LARS \cite{LARS},
	are fundamental for many interesting recent results about the LASSO; see e.g. 
	\cite{SuBogdanCandes:ArXiv15}. 
	
	The problem of establishing the uniqueness for the LASSO has been addressed in 
	a number of both early and recent results \cite{Fuchs:IEEEIT04},
	\cite{Osborne:LASSODual00}, \cite{Dossal:CRAS12}, \cite{Tibshirani:EJS13}, \cite{ZhangYinChen:JOTA15} and \cite{Gilbert:INRIAReport15}. The papers \cite{Fuchs:IEEEIT04}, \cite{Dossal:CRAS12} and \cite{Gilbert:INRIAReport15} mainly deal with the Basis Pursuit (i.e. the noisefree LASSO). The papers 
	\cite{Osborne:LASSODual00}, \cite{Tibshirani:EJS13} and \cite{ZhangYinChen:JOTA15} address uniqueness in 
	the LASSO but the conditions for uniqueness given in 
	these works depend on the design matrix $X$ and either the observation vector $y$ or the sign pattern of the original regression vector. 
	
	The goal of the present work is to provide a simple, uniform in $\lambda$, condition for uniqueness depending on the design matrix $X$ only. 
	
	\subsection{Plan of the paper}
	
	Section \ref{opt} recalls the optimality conditions associated to the LASSO. In Section \ref{lassolamb}, we study the 
	uniqueness of the standard LASSO estimator of $\beta$. 
	%\newpage

	\subsection{Notations} 
	
	Let us briefly recall some basic notations. For $I\subset \left\{1,\ldots,p\right\}$, $|I|$ denotes the cardinal of $I$. For $x\in \R^{p}$, we set $x_{I}=(x_{i})_{i\in I}\in\R^{|I|}$.
	The usual scalar product is denoted by $\la\cdot,\cdot\ra$. 
	For any matrix $A$, we denote by $A^{t}$ its transpose. For $I\subset \left\{1,\ldots,p\right\}$, and a matrix $X$,
	we denote by $X_{I}$ the submatrix whose columns are indexed by $I$.
	
	The set of symmetric real matrices is denoted by 
	$\mathbb S_n$. For any matrix $A$ in $\mathbb R^{d_1\times d_2}$, we denote by $\|A\|$ the operator norm of $A$. 
	The maximum (resp. minimum) singular value of $A$ is denoted by $\sigma_{\max}$ (resp. $\sigma_{\min}(A)$). 
	Recall that $\sigma_{\max}(A)=\|A\|$ and $\sigma_{\min}(A)^{-1}=\|A^{-1}\|$.
	We use the Loewner ordering on symmetric real matrices: if $A\in \mathbb S_n$, 
	$0\preceq A$ is equivalent to saying that $A$ is positive semi-definite, and $A\preceq B$ stands for $0\preceq B-A$.
	
	For any vector $b\in\mathbb R^p$, $b^{+}$ (resp. $b^{-}$) denotes its non-negative (resp. non-positive)
	part, i.e. $b=b^{+}-b^{-}$, with $b^{+}_j,\ b^{-}_j\ge 0$.
	
	For a given support $S\subset\{1,\ldots,n\}$, we denote the range of $X_S$ by $V_S$ and the orthogonal projection onto $V_S$ by $\gP_{V_S}$. Recall that
	\bean
	\gP_{V_S}=X_S (X_S^tX_{S})^{-1} X_S^t.
	\eean
	
	The support of $\ch{\beta}_\lb$ is denoted by $\ch{T}_\lb$.
	For the sake of notational simplicity, we write
	\bea
	\ch{\beta}_{\ch{T}_\lambda} & := & \left(\ch{\beta}_{\lambda}\right)_{\ch{T}_\lambda}.
	\eea

	\section{Optimality conditions} \label{opt}
	
	In this section, we review the standard optimality conditions for the LASSO estimator. 
	A necessary and sufficient optimality condition in (\ref{lasso}) is that
	\bea
	0\in \partial \Big(\frac12 \|y-X \ch{\beta}_\lb\|_2^2+ \lambda 
	\|\ch{\beta}_\lb\|_1\Big),
	\eea
	where $\partial$ denotes the sub-differential, which is equivalent to the existence of $g_\lb$ in $\partial \|\cdot\|_1$ at
	$\ch{\beta}_\lb$ such that
	\bea
	\label{glamb}
	-X^t(y-X \ch{\beta}_\lb)+ \lambda g_\lb & = & 0.
	\eea
	On the other hand, the sub-differential of $\|\cdot\|_1$ at $\ch{\beta}_\lb$ is defined by
	\bean
	\partial \|\cdot\|_1(\ch{\beta}_\lb)=
	\left\{\gamma \in \R^p,\ \gamma_{\ch{T}_\lb}=\sgn(\ch{\beta}_{\ch{T}_\lb}) 
	\textrm{ and }\|\gamma_{\ch{T}_\lb^c}\|_{\infty}<1 \right\}.
	\eean
	Thus, using the fact that $y=X\beta+z$, we may easily conclude that 
	a necessary and sufficient condition for optimality in (\ref{lasso}) is the existence of a vector
	$g_\lb$, satisfying $g_{\ch{T}_\lb}=\sgn(\ch{\beta}_{\ch{T}_\lb})$ and
	$\|g_{\ch{T}_\lb^c}\|_{\infty}<1$, and such that
	\bea \label{opt1}
	X^t(y-X\ch{\beta}_\lb) & = & \lambda \ g_\lb. 
	\eea
	
	The following corollary is a direct but important consequence of these previous preliminary remarks.
	\begin{cor}
		\label{kk}
		A necessary and sufficient condition for a given random vector $\mathsf{b}$ with support 
		$\mathsf{T}$ to simultaneously satisfy the two following conditions:
		\begin{enumerate}
			\item $\mathsf{b}=\ch{\beta}_\lb$,
			\item $\mathsf{b}$ has the same support $T$ and sign pattern $\sgn(\beta_T)$ as $\beta$
		\end{enumerate} 
		is that 
		\bea
		\label{cond1}
		X_T^t (y-X\mathsf{b}) & = & \lb \ \sgn(\beta_T) \\
		\label{cond2}
		\|X_{T^c}^t (y-X\mathsf{b})\|_{\infty} & < & \lb.
		\eea
	\end{cor}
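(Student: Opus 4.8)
The plan is to read off both implications directly from the optimality characterization~(\ref{glamb})--(\ref{opt1}) and the description of $\partial\|\cdot\|_1$ recalled above, with the support and sign pattern of $\mathsf{b}$ frozen to those of $\beta$; in particular Assumption~\ref{ass4} plays no role.

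For the necessity part I would argue as follows. Assume $\mathsf{b} = \ch{\beta}_\lb$ and that $\mathsf{b}$ has support $T$ and sign pattern $\sgn(\beta_T)$. Then $\ch{T}_\lb = T$ and $\sgn(\ch{\beta}_{\ch{T}_\lb}) = \sgn(\beta_T)$, so by~(\ref{opt1}) there is a vector $g_\lb$ with $g_T = \sgn(\beta_T)$, $\|g_{T^c}\|_\infty < 1$, and $X^t(y - X\mathsf{b}) = \lb\, g_\lb$. Restricting this identity to the rows indexed by $T$ gives $X_T^t(y - X\mathsf{b}) = \lb\,\sgn(\beta_T)$, i.e.~(\ref{cond1}); restricting it to the rows indexed by $T^c$ gives $X_{T^c}^t(y - X\mathsf{b}) = \lb\, g_{T^c}$, hence $\|X_{T^c}^t(y - X\mathsf{b})\|_\infty = \lb\,\|g_{T^c}\|_\infty < \lb$, i.e.~(\ref{cond2}).

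For the sufficiency part, assume $\mathsf{b}$ has support $T$ and sign pattern $\sgn(\beta_T)$ and that~(\ref{cond1})--(\ref{cond2}) hold. I would exhibit the dual certificate $g_\lb := \lb^{-1}\,X^t(y - X\mathsf{b})$. By~(\ref{cond1}) its restriction to $T$ equals $\sgn(\beta_T) = \sgn(\mathsf{b}_T)$, and by~(\ref{cond2}) we have $\|g_{T^c}\|_\infty < 1$; since the support of $\mathsf{b}$ is exactly $T$, these are precisely the two requirements for $g_\lb$ to belong to $\partial\|\cdot\|_1$ at $\mathsf{b}$. As moreover $-X^t(y - X\mathsf{b}) + \lb\, g_\lb = 0$ by construction, the first-order condition~(\ref{glamb}) holds at $\mathsf{b}$, so $\mathsf{b}$ minimizes $b \mapsto \frac12\|y - Xb\|_2^2 + \lb\|b\|_1$, that is $\mathsf{b} = \ch{\beta}_\lb$; combined with the standing hypothesis on the support and signs of $\mathsf{b}$, this yields both~(1) and~(2).

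I do not expect a genuine obstacle: the whole argument reduces to the algebraic substitution $g_\lb = \lb^{-1}X^t(y - X\mathsf{b})$ together with a split of every vector along $T$ and $T^c$, and it relies only on the sufficiency of the first-order condition for the convex LASSO objective. The single point that deserves care is that, in the sufficiency direction, the ``support $T$ and signs $\sgn(\beta_T)$'' content of condition~(2) must genuinely be used in order to recognize $g_\lb$ as a subgradient of $\|\cdot\|_1$ at $\mathsf{b}$; the quantitative heart of the statement is carried by the strict inequality~(\ref{cond2}).
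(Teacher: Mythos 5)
Your necessity argument is exactly the paper's. For sufficiency, however, you prove a strictly weaker implication than the corollary asserts: you take as a standing hypothesis that $\mathsf{b}$ has support $T$ and sign pattern $\sgn(\beta_T)$ --- i.e.\ you assume condition (2) --- and then deduce (1). The corollary, and the paper's proof of it, claim that (\ref{cond1})--(\ref{cond2}) alone yield \emph{both} (1) and (2): the paper sets $\mathsf{g}=\lb^{-1}X^t(y-X\mathsf{b})$, asserts that $\mathsf{g}\in\partial\|\cdot\|_1(\mathsf{b})$ with the support of $\mathsf{b}$ equal to $\{j:\ |\mathsf{g}_j|=1\}$, and then uses $\mathsf{g}_T=\sgn(\beta_T)$ and $\|\mathsf{g}_{T^c}\|_\infty<1$ to conclude $\mathsf{T}=T$ and that the sign patterns coincide, after which optimality of $\mathsf{b}$ follows. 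The derivation of (2) from the two displayed conditions is therefore part of what has to be proved, and your argument does not supply it; as a proof of the literal statement this is a gap.

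That said, the caveat you raise at the end is precisely where the paper's own converse is fragile: nothing in (\ref{cond1})--(\ref{cond2}) by itself makes $\mathsf{g}$ a subgradient of $\|\cdot\|_1$ at $\mathsf{b}$ --- for that one needs $\mathsf{g}_j=\sgn(\mathsf{b}_j)$ on the support of $\mathsf{b}$, which is exactly the support/sign information of (2), and the paper's appeal to (\ref{glamb}) at this point implicitly presupposes $\mathsf{b}=\ch{\beta}_\lb$. Worse, (\ref{cond1})--(\ref{cond2}) depend on $\mathsf{b}$ only through $X\mathsf{b}$: when $p>n$, replacing $\mathsf{b}$ by $\mathsf{b}+v$ with $0\neq v\in\ker X$ leaves both conditions untouched while destroying (1) and (2), so the sufficiency direction cannot hold literally without some additional hypothesis tying the support of $\mathsf{b}$ to $T$ --- which is what your reading adds. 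In short: your proof is a correct and clean proof of the version in which (2) is assumed in the converse, but it does not (and could not) recover the stronger equivalence as stated; if you want to match the paper you should at least make explicit that the subgradient identification requires the support/sign hypothesis, and note that the paper's own proof elides this step.
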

	
	\begin{proof}
		The fact that (\ref{cond1}) and (\ref{cond2}) are necessary is a straightforward 
		consequence of (\ref{opt1}). Conversely, assume that (\ref{cond1}) and (\ref{cond2}) hold. Set 
		\bea
		\label{glambprf}
		\mathsf{g} & = & \frac1{\lambda} X^t(y-X \mathsf{b}).
		\eea
		Using (\ref{glamb}), we deduce that $\mathsf{g}$ belongs to $\partial \|\cdot\|_1(\mathsf{b})$
		and that the support of $\mathsf{b}$ is exactly the set $\mathsf{T}=\{j\in \{1,\ldots,p\},\
		|\mathsf{g}_j|=1 \}$. On the other hand, we have that 
		\bea
		\mathsf{g} & = & \sgn(\beta_T) \\
		\|\mathsf{g}\|_{\infty} & < & 1,
		\eea
		and we may deduce that $\mathsf{g}$ is at the same time in the sub-differential of any vector 
		$b$ in $\mathbb R^p$ with same support and sign pattern as $\beta$. Therefore, we have   
		\bea 
		T & = \{j\in \{1,\ldots,p\},\
		|\mathsf{g}_j|=1 \} & = \mathsf{T},
		\eea
		and we conclude that $\beta$ and $\mathsf{b}$ have the same support. Moreover, the index set $T^+$ of the 
		positive components of $\beta$ and the index set $\mathsf{T}^+$ of the positive components of $\mathsf{b}$ 
		satisfy  
		\bea 
		T^+ & = \{j\in \{1,\ldots,p\},\ 
		\mathsf{g}_j=1 \} & = \mathsf{T}^+.
		\eea
		The same argument implies that the index set $T^-$ of the negative components of $\beta$ equals the index
		set $\mathsf{T}^-$ of the negative components of $\mathsf{b}$. To sum up, $\beta$ and $\mathsf{b}$ have the 
		same support and sign pattern and the proof is completed. This moreover implies that 
		(\ref{cond1}) and (\ref{cond2}) are the optimality conditions for (\ref{lasso}) and we obtain that 
		$\mathsf{b}=\ch{\beta}$ as announced. 
	\end{proof}
	
	\section{Uniqueness of the LASSO estimator}
	\label{lassolamb}
	
	\subsection{The General Position Condition}
	
	Our main assumption on the design matrix $X$ is the following. 
	\begin{ass}{\rm (General Position Condition for $X$)} 
		\label{ass4}
		For all supports $S\neq S^\prime\subset\{1,\ldots,n\}$ and all $(\ep_S, \ep_{S^\prime}) \in \{-1,1\}^{|S|}\times \{-1,1\}^{|S^\prime|}$ such that $X_{S}$ and $X_{S^\prime}$ are non-singular, we have
		\bea
		\label{gp}
		\ep_S (X_S^tX_{S})^{-1} \ep_S & \neq & \ep_{S^\prime}^t  (X_{S^\prime}^tX_{S^\prime})^{-1} \ep_{S^\prime} \\
		\ep_S (X_S^tX_{S})^{-1} (X_{S}^tX_{S^\prime}) (X_{S^\prime}^tX_{S^\prime})^{-1} \ep_{S^\prime}
		& \neq & \left| \ep_S (X_S^tX_{S})^{-1} \ep_S \right|.
		\eea
	\end{ass}
	Since $S\neq S^\prime$, this property clearly holds with probability one if the entries of $X$ are independent and have an absolutely continuous 
	density with respect to the Lebesgue measure. This is a generic situation in statistics where the covariate measurements 
	are usually corrupted by some noise. In the case of a more general type of design, we believe that this definition could easily 
	be generalized so as to guarantee that (\ref{gp}) fails with probability at most of the order $p^{-\alpha}$ or 
	is automatically satisfied for a carefully chosen deterministic design. A similar property, called General Position (GP) 
	was proposed in \cite{Dossal:CRAS12} for the problem of finding the sparsest solution of a linear system with application to Basis Pursuit. 
	
	This section establishes various continuity and monotonicity properties
	of some important functions of $\ch{\beta}_\lb$ using the General Position Condition assumption only. 
	
	The following notations will be useful. Define $\mathcal  L$ as the cost function:
	\bef{\mathcal  L}
	\R_{+}^{\star}\times \R^{p} &  \longrightarrow & \R_{+}\\
	(\lb,b) & \longmapsto & \d \frac12 \|y-X b\|_2^2+\lb \|b\|_1,
	\eef
	and for all $\lb>0$,
	\bea
	\theta(\lambda) & = & \inf_{b\in \mathbb R^p} \mathcal  L(\lb,b).
	\eea

	We begin with the following useful characterization of the LASSO estimators. For any $w\in\R^p$, let us introduce 
	\bea \label{grosp}
	\cal P(w) &= & \down{b\in \R^p,\ Xb=Xw}{\rm argmin} \|b\|_1.
	\eea
	
	\begin{lemm}
		\label{lprep} 
		A vector $\ch{\beta}_\lb$ is a solution of (\ref{lasso}) if and only if 
		$\ch{\beta}_\lb \in \cal P(\ch{\beta}_\lb)$.
	\end{lemm}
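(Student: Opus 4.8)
The plan is to exploit the elementary but decisive observation that the fidelity term of $\mathcal L(\lambda,b)=\frac12\|y-Xb\|_2^2+\lambda\|b\|_1$ depends on $b$ only through the product $Xb$, and is therefore constant along each affine fiber $\{b\in\R^p : Xb=v\}$. Minimizing $\mathcal L(\lambda,\cdot)$ thus decouples into two nested problems: for a fixed fitted value $v$ in the range of $X$ one first minimizes $\|b\|_1$ over the fiber above $v$ — which is exactly the problem (\ref{grosp}) defining $\mathcal P$ — and then optimizes the resulting quantity over $v$. Both implications of the lemma will follow from this remark.

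For the forward implication, I would take $\ch{\beta}_\lambda$ to be a solution of (\ref{lasso}) and let $b$ be an arbitrary vector with $Xb=X\ch{\beta}_\lambda$. The fidelity terms of $b$ and $\ch{\beta}_\lambda$ then coincide, so $\mathcal L(\lambda,b)-\mathcal L(\lambda,\ch{\beta}_\lambda)=\lambda(\|b\|_1-\|\ch{\beta}_\lambda\|_1)$, which is $\ge 0$ by optimality of $\ch{\beta}_\lambda$; since $\lambda>0$ this forces $\|\ch{\beta}_\lambda\|_1\le\|b\|_1$. As $b$ ranges over the whole fiber of $X\ch{\beta}_\lambda$, this says precisely that $\ch{\beta}_\lambda\in\mathcal P(\ch{\beta}_\lambda)$.

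For the converse, I would assume $\ch{\beta}_\lambda\in\mathcal P(\ch{\beta}_\lambda)$ and pick any minimizer $b^\star$ of (\ref{lasso}). First I would record the standard fact that all minimizers of (\ref{lasso}) share one and the same fitted value: strict convexity of $v\mapsto\frac12\|y-v\|_2^2$ on the range of $X$, combined with convexity of $\|\cdot\|_1$ and of the set of minimizers, shows that a midpoint of two minimizers with distinct fitted values would strictly lower $\mathcal L$. Identifying $X\ch{\beta}_\lambda$ with this common value, $b^\star$ and $\ch{\beta}_\lambda$ lie in one fiber; on it $b^\star$ is $\ell_1$-minimal by the forward implication and $\ch{\beta}_\lambda$ by hypothesis, so $\|\ch{\beta}_\lambda\|_1=\|b^\star\|_1$, and with equal fidelity terms $\mathcal L(\lambda,\ch{\beta}_\lambda)=\mathcal L(\lambda,b^\star)=\theta(\lambda)$; hence $\ch{\beta}_\lambda$ is a solution of (\ref{lasso}).

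The only delicate point I foresee lies in the converse: the identification of the candidate's fitted value $X\ch{\beta}_\lambda$ with the common optimal fitted value of the genuine minimizers. This rests on the standing convention that $\ch{\beta}_\lambda$ denotes a LASSO estimate, and it is essentially the sole non-formal ingredient of the argument — everything else reduces to the decoupling of $\mathcal L$ into a term constant on the fibers of $X$ and the $\ell_1$ norm.
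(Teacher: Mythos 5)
Your ``only if'' direction is exactly the paper's argument: the fidelity term is constant on the fiber $\{b:Xb=X\hat{\beta}_\lambda\}$, so optimality forces $\ell_1$-minimality there. The converse is where you and the paper part ways, and the delicate point you flag is genuine: read literally for an arbitrary vector, the ``if'' part is false (e.g.\ $b=0$ always satisfies $0\in\mathcal P(0)$, yet $0$ solves (\ref{lasso}) only when $\lambda\ge\|X^ty\|_\infty$), so some tie to the optimal fiber cannot be avoided. The paper resolves this implicitly: its proof fixes a genuine solution $\hat{\beta}_\lambda$, takes an arbitrary $\tilde{\beta}_\lambda\in\mathcal P(\hat{\beta}_\lambda)$, and shows the two have equal $\ell_1$ norms and equal fidelity; this simultaneously gives $\hat{\beta}_\lambda\in\mathcal P(\hat{\beta}_\lambda)$ and the statement that is actually used later, namely that every element of $\mathcal P(\hat{\beta}_\lambda)$ is itself a solution of (\ref{lasso}) (in Theorem \ref{uniq} the extreme point $b^*$ satisfies $Xb^*=X\hat{\beta}_\lambda$ by construction, so it lies on the optimal fiber by fiat). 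Your detour through the uniqueness of the fitted value (strict convexity of $v\mapsto\tfrac12\|y-v\|_2^2$ on the range of $X$) is a correct and slightly more informative way to place a candidate on that fiber, but as written your converse is circular: if the identification $X\hat{\beta}_\lambda=Xb^\star$ ``rests on the standing convention that $\hat{\beta}_\lambda$ denotes a LASSO estimate,'' then the conclusion is assumed rather than proved. Restate the converse in the form the paper actually needs --- any $b$ with $Xb=Xb^\star$ for some solution $b^\star$ and $b\in\mathcal P(b)$ is again a solution --- and your fiber computation closes it with no further input; in that form your proof and the paper's are essentially the same argument.
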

	
	\begin{proof} 
		Let $\ch{\beta}_\lb$ be a solution of (\ref{lasso}). Let $\wdt{\beta}_\lb \in \cal P(\ch{\beta}_\lb)$. Then, we have 
		\bea 
		\label{optl1}
		\|\tilde{\beta}_\lb\|_1 & \le & \|\ch{\beta}_\lb\|_1.
		\eea
		On the other hand, the definition of $\ch{\beta}_\lb$ implies that
		\bea 
		\label{optemp}
		\frac12 \|y-X\tilde{\beta}_\lb\|_2^2+\lb \|\tilde{\beta}_\lb\|_1 & \ge &  
		\frac12 \|y-X\ch{\beta}_\lb\|_2^2+\lb \|\ch{\beta}_\lb\|_1.
		\eea
		Moreover, since $X\tilde{\beta}_\lb=X\ch{\beta}_\lambda$, we have that 
		\bea 
		\label{l2fid}
		\frac12 \|y-X\tilde{\beta}_\lb\|_2^2 & = & \frac12 \|y-X\ch{\beta}_\lb\|_2^2,
		\eea 
		and subtracting this equality to (\ref{optemp}), we obtain that  
		\bean 
		\|\ch{\beta}_\lb\|_1  & \le & \|\tilde{\beta}_\lb\|_1,
		\eean
		which, combined with (\ref{optl1}), implies that 
		\bea
		\label{1pen} 
		\|\ch{\beta}_\lb\|_1  & = & \|\tilde{\beta}_\lb\|_1.
		\eea
		This last equality together with (\ref{l2fid}) implies the desired result.
	\end{proof}
	
	We now give a useful expression of 
	$\ch{\beta}_\lambda$ in terms of $\lb$ and the submatrix of $X$ indexed by $\ch{T}$. 
	\begin{lemm}
		\label{} 
		For any $\lambda>0$ such that $\ch{\beta}_\lambda\neq 0$, the matrix $X_{\ch{T}_\lb}$ is non-singular and we have
		\bea
		\label{turtle}
		\ch{\beta}_{\ch{T}_{\lb}} & = & (X_{\ch{T}_\lb}^tX_{\ch{T}_\lb})^{-1}\left(X_{\ch{T}_\lb}^ty-\lambda \ \sgn\left(\ch{\beta}_{\ch{T}_\lb}\right)\right).
		\eea
	\end{lemm}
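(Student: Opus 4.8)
The plan is to start from the optimality condition~\eqref{opt1} and extract its restriction to the support $\ch T_\lb$. By definition of the sub-differential, on the support we have $g_{\ch T_\lb}=\sgn(\ch\beta_{\ch T_\lb})$, so the block of \eqref{opt1} indexed by $\ch T_\lb$ reads
\bea
X_{\ch T_\lb}^t\bigl(y-X\ch\beta_\lb\bigr) & = & \lambda\,\sgn\bigl(\ch\beta_{\ch T_\lb}\bigr).
\eea
Since $\ch\beta_\lb$ is supported on $\ch T_\lb$, we have $X\ch\beta_\lb=X_{\ch T_\lb}\ch\beta_{\ch T_\lb}$, and substituting this in gives
\bea
X_{\ch T_\lb}^ty-X_{\ch T_\lb}^tX_{\ch T_\lb}\,\ch\beta_{\ch T_\lb} & = & \lambda\,\sgn\bigl(\ch\beta_{\ch T_\lb}\bigr).
\eea
So, \emph{provided} $X_{\ch T_\lb}^tX_{\ch T_\lb}$ is invertible, rearranging yields exactly~\eqref{turtle}. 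The whole content of the lemma is therefore the non-singularity claim; once that is in hand the formula is immediate algebra.

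The key step is thus to prove that $X_{\ch T_\lb}$ has full column rank whenever $\ch\beta_\lb\neq 0$. I would argue by contradiction: suppose the columns of $X_{\ch T_\lb}$ are linearly dependent. Here is where Lemma~\ref{lprep} enters. By that lemma, $\ch\beta_\lb\in\cal P(\ch\beta_\lb)$, i.e. $\ch\beta_\lb$ is a minimal $\ell_1$-norm element of the affine set $\{b:Xb=X\ch\beta_\lb\}$. If $X_{\ch T_\lb}$ were rank-deficient, there would be a nonzero $h\in\R^p$ supported on $\ch T_\lb$ with $X_{\ch T_\lb}h=0$, hence $Xh=0$, so $\ch\beta_\lb+th$ stays in that affine set for all $t$. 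For $|t|$ small enough the perturbation does not change any sign on the support (since $\ch\beta_{\ch T_\lb}$ has all coordinates nonzero), so $\|\ch\beta_\lb+th\|_1=\|\ch\beta_\lb\|_1+t\,\langle\sgn(\ch\beta_{\ch T_\lb}),h_{\ch T_\lb}\rangle$ is affine in $t$ near $0$; minimality of $\|\ch\beta_\lb\|_1$ forces $\langle\sgn(\ch\beta_{\ch T_\lb}),h\rangle=0$, and then for small $|t|$ of the appropriate sign we could actually decrease the $\ell_1$-norm by moving $t$ until some coordinate hits zero — contradicting either minimality or the assumption $\ch\beta_\lb\neq0$ (one coordinate would vanish, but the remaining part still has strictly smaller norm among a strictly smaller support, and iterating contradicts $\ch\beta_\lb\in\cal P(\ch\beta_\lb)$). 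This shows $X_{\ch T_\lb}$ is injective on its column space, i.e. non-singular.

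The main obstacle is getting the contradiction argument clean: one must be careful that moving along $h$ can change the support (coordinates can hit zero), so the naive ``affine, hence non-minimal'' argument needs the observation that as soon as we reach a boundary we have found a strictly sparser point with no larger $\ell_1$-norm, still in $\cal P(\ch\beta_\lb)$ — and repeating drives the norm strictly down unless $h=0$. An alternative, perhaps slicker route avoiding case analysis: note $X\ch\beta_\lb=\gP_{V_{\ch T_\lb}}y-(\,\cdot\,)$ type identities are not available without invertibility, so I would instead lean on the strict convexity of $b\mapsto\tfrac12\|y-Xb\|_2^2$ \emph{restricted to the support}: if $X_{\ch T_\lb}$ were singular, there would be two distinct minimizers of $\mathcal L(\lb,\cdot)$ with the same support and sign pattern (translating along $\ker X_{\ch T_\lb}$ while keeping signs and, after subtracting the $\langle\sgn,h\rangle=0$ relation, the same $\ell_1$-norm and the same fidelity), and one checks this contradicts Lemma~\ref{lprep} applied to the midpoint. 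Either way, once non-singularity is secured, \eqref{turtle} follows by the two-line computation above.
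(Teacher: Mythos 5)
Your reduction of (\ref{turtle}) to the non-singularity of $X_{\ch{T}_\lb}$, via the block of the optimality condition (\ref{opt1}) indexed by $\ch{T}_\lb$, is exactly the paper's computation and is fine. The gap is in your proof of non-singularity. Once you have forced $\langle\sgn(\ch{\beta}_{\ch{T}_\lb}),h_{\ch{T}_\lb}\rangle=0$ for a kernel direction $h$ of $X_{\ch{T}_\lb}$, translating along $h$ leaves the $\ell_1$-norm \emph{constant} (and the fidelity term too, since $Xh=0$) until some coordinate hits zero; at that moment you have merely produced another element of $\cal P(\ch{\beta}_\lb)$ with the same $\ell_1$-norm and strictly smaller support. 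That contradicts nothing: Lemma \ref{lprep} says $\ch{\beta}_\lb\in\cal P(\ch{\beta}_\lb)$, not that $\cal P(\ch{\beta}_\lb)$ is a singleton, and your claims that one could ``actually decrease the $\ell_1$-norm'' and that the remaining part has ``strictly smaller norm'' are incompatible with the zero slope you have just established. The alternative route fails for the same reason: exhibiting two distinct minimizers of $\mathcal L(\lb,\cdot)$ does not contradict Lemma \ref{lprep}; it would only contradict the uniqueness of Theorem \ref{uniq}, which is proved \emph{after} this lemma (under Assumption \ref{ass4}) and indeed uses (\ref{turtle}) in its proof. Moreover, the statement you set out to prove is false for an arbitrary LASSO solution with no further hypotheses: for $n=1$, $p=2$, $X=(1\ \ 1)$, $y=1$ and $0<\lb<1$, every $b\ge 0$ with $b_1+b_2=1-\lb$ is a solution, and those with both coordinates positive have a singular support matrix. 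Since your argument never invokes Assumption \ref{ass4} or uniqueness, it cannot establish the claim for the given solution.

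What your (corrected) kernel-translation argument does prove is that $\cal P(\ch{\beta}_\lb)$ contains \emph{some} point whose support matrix is non-singular, and by Lemma \ref{lprep} any such point is again a LASSO solution. That is precisely the paper's route: in the proof of Theorem \ref{uniq}, the extreme points of the linear program (\ref{linp}) are shown, via the basis characterization from linear programming, to have support of cardinality at most $n$ with non-singular support matrix, and $\ch{\beta}_\lb$ is then taken, without loss of generality (and by the convention stated with Theorem \ref{uniq}), to be such an extreme point; uniqueness under Assumption \ref{ass4} makes this choice harmless a posteriori. To repair your write-up, either stop at the existence of a minimal-$\ell_1$ representative with linearly independent active columns and adopt that convention, or restructure so that uniqueness is available before you assert non-singularity of the support matrix of \emph{the} solution.
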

	
	\begin{proof}  
		Recall that the optimality conditions for the LASSO imply that 
		\bea 
		X_{\ch{T}_\lb}^t(y-X_{\ch{T}_\lb}\ch{\beta}_{\ch{T}_\lb}) & = & \lb \ \sgn(\ch{\beta}_{\ch{T}_\lb}).
		\eea 
		Since $X_{\ch{T}_\lb}$ is non-singular, we obtain (\ref{turtle}).
	\end{proof}  
	
	The following Theorem establishes the unicity of $\ch{\beta}_\lb$ and shows that its support is of size at most $n$.
	\begin{theo}
		\label{uniq} 
		Assume that Assumption \ref{ass4} holds. Then, almost surely, for any $\lambda>0$, the minimization problem (\ref{lasso}) 
		has a unique solution $\ch{\beta}_\lambda$, and its support $\ch{T}_{\lb}\subset \{1,\ldots,p\}$ verifies
		\bea
		|\ch{T}_\lambda| & \le & n.
		\eea 
	\end{theo}
	
	\begin{proof} 
		We first study the support of a possible solution $\ch{\beta}_\lb$. Second, we derive (\ref{turtle}), and eventually, we prove the uniqueness of $\ch{\beta}_\lb$ under the general position condition.
		
		\subsubsection*{Study of $\# \ch{T}$} Recall that $b^{+}$ (resp. $b^{-}$) be the non-negative (resp. non-positive)
		part of $b$, i.e. $b=b^{+}-b^{-}$, with $b^{+}_j,\ b^{-}_j\ge 0$.
		Then, Lemma \ref{lprep} above equivalently says that $\ch{\beta}_\lb$ is a solution 
		of (\ref{lasso}) if and only if $\ch{\beta}_\lambda^{+}$ and $\ch{\beta}_\lambda^{-}$ are
		solutions of
		\begin{equation}
		\label{linp}
		\min_{b^+,\ b^- \in \R_+^p}
		\sum_{j=1}^{p} \left\{b_j^{+}+b_j^{-}\right\} \ \textrm{s.t.}\ Xb^+-Xb^-=X\ch{\beta}_\lambda.
		\end{equation}
		The remainder of the proof relies on linear programming theory and
		Assumption \ref{ass4}. Notice first that the solution set is compact due to
		the coercivity of the $\ell_1$-norm. Thus, the theory of linear
		programming \cite{Schrijver:ThLIP86} ensures that each extreme point of
		the solution set of (\ref{linp}) is completely determined by a "basis"
		$B$. In the present setting, for an extreme point $b^*={b^*}^+-{b^*}^-$ of the
		solution set of (\ref{linp}), the associated basis $B^*$ can be written (in a non-unique way)
		as $B^*=B^{*+}\cup B^{*-}$, $|B^*|=n$, and is such that
		\begin{itemize}
			\item[(i)] the square matrix $[X_{B^{*+}},-X_{B^{*-}}]$ is non singular,
			\item[(ii)]  ${b^*}_{{B^{*}}^c}=0$ and 
			\item[(iii)]  the couple $({b^*}^+_{B^{*+}},{b^*}^-_{B^{*-}})$ is uniquely determined by the system
			\bea 
			X_{B^{*+}}{b^*}^+_{B^{*+}}-X_{B^{*-}} {b^*}^-_{B^{*-}} & = &
			X\ch{\beta}_\lambda, 
			\eea 
			(or equivalently, $X_{B^*}b_{B^*}^* = X \ch{\beta}_\lambda$). 
		\end{itemize}
		An immediate consequence is that the support of $b^*$ has cardinal at most $n$. Moreover, 
		$b^*\in \cal P(b^*)$,
		and using Lemma \ref{lprep}, we deduce that $b^*$ is a solution of (\ref{lasso}).
		Therefore, we may assume without loss of generality that $\ch{\beta}_\lb$ is an extreme point 
		of $\cal P(\ch{\beta}_\lb)$, with
		$$\# \ch{T}_\lb \le n$$
		and that $X_{\ch{T}_\lb}$ is non-singular.

		\subsubsection*{Uniqueness of $\ch{\beta}_\lambda$: first part} --- We 
		give two equations satisfied by $\lb$ and $z$ in the case where uniqueness of 
		the LASSO estimator fails. 
		
		Let $\ch{\beta}_\lb^\prime$ in $\mathbb R^p$ be another 
		solution of (\ref{lasso}). Using the same reasonning as for $\ch{\beta}_\lb$ in the end of the last paragraph, 
		we may assume w.l.o.g. that the support $\ch{T}_\lb^\prime$ of $\ch{\beta}_\lb^\prime$ has cardinal at most $n$
		and that $X_{\ch{T}_\lb^\prime}$ is non-singular. 
		Convexity of the LASSO functional implies that the map
		\bef{\phi}
		\label{couac}
		[0,1] & \longrightarrow & \R_+\\
		t & \longmapsto & \cal L \left(\lb, (t\ \ch{\beta}_\lb +(1-t)\ \ch{\beta}_\lb^\prime)\right)
		\eef
		is constant.
		
		Notice that the term $\|\ch{\beta}_\lb^\prime + t \ (\ch{\beta}_\lb - \ch{\beta}_\lb^\prime)\|_1$ is 
		in fact piecewise affine on $(0,t)$. 
		Set
		\bean 
		\rho_\lb & = & \sgn(\ch{\beta}_{\ch{T}_\lb}) \\
		\rho_\lb^\prime & = & \sgn(\ch{\beta}_{\ch{T}_\lb^\prime}^\prime).
		\eean 
		Now, let $t^\star>0$ sufficiently small such that for all $t\in(0,t^\star)$ the support of 
		$\ch{\beta}_\lb^\prime + t \ (\ch{\beta}_\lb - \ch{\beta}_\lb^\prime)$ is constant and equal to 
		$\ch{T}_\lb \cup \ch{T}_\lb^\prime$ and no sign change occurs. 
		Set 
		\bea 
		\rho & = & \sgn \left( \left( \ch{\beta}_\lb^\prime + t \ (\ch{\beta}_\lb - \ch{\beta}_\lb^\prime)\right)
		_{\ch{T}_\lb \cup \ch{T}_\lb^\prime}\right).
		\eea
		Thus, for all $t\in(0,t^*)$,   
		\bean 
		\|\ch{\beta}_\lb^\prime + t \ (\ch{\beta}_\lb - \ch{\beta}_\lb^\prime)\|_1
		& = & \rho^t \ch{\beta}_\lb^\prime + t \ \rho^t (\ch{\beta}_\lb - \ch{\beta}_\lb^\prime)
		\eean
		with
		\bean
		\rho_{\ch{T}_\lb}  = \rho_\lb \hspace{.3cm} & \textrm{ and } \hspace{.3cm} 
		\rho_{\ch{T}_\lb^\prime}  & =  \rho_\lb^\prime 
		\eean
		and we deduce that $\phi$ is a second order polynomial in the variable $t\in(0,t^*)$. 
		Therefore, the coefficients corresponding to the quadratic and linear terms of $\phi$ must be zero. 
		Developing the term $\frac12 \|y-X (t\ \ch{\beta}_\lb +(1-t)\ \ch{\beta}_\lb^\prime)\|_2^2$, we then obtain:
		\bean
		%\label{quadoo}
		X_{\ch{T}_\lb}\ch{\beta}_\lb - X_{\ch{T}_\lb^\prime}\ch{\beta}_\lb^\prime & = & 0\\
		%\label{bleugle}
		y^t (X_{\ch{T}_\lb}\ch{\beta}_\lb - X_{\ch{T}_\lb^\prime}\ch{\beta}_\lb^\prime)
		+ \lb \ \rho^t (\ch{\beta}_\lb - \ch{\beta}_\lb^\prime) & = & 0,
		\eean
		which is equivalent to 
		\bea 
		\label{quadoo}
		X_{\ch{T}_\lb}\ch{\beta}_\lb - X_{\ch{T}_\lb^\prime}\ch{\beta}_\lb^\prime & = & 0\\
		\label{bleugle}
		\rho^t (\ch{\beta}_\lb - \ch{\beta}_\lb^\prime) & = & 0.
		\eea

		\subsubsection*{Uniqueness of $\ch{\beta}_\lambda$: second part} --- As for $\ch{\beta}_{\ch{T}_\lb}$, we write
		\bea
		\label{turtleprime}
		\ch{\beta}_{\ch{T}_\lb^\prime}^\prime & = & 
		(X_{\ch{T}_\lb^\prime}^tX_{\ch{T}_\lb^\prime})^{-1}  \left( X_{\ch{T}_\lb^\prime}^t y
		- \lb \ \sgn(\ch{\beta}_{\ch{T}_\lb^\prime})\right).
		\eea
		Replacing (\ref{turtle}) and (\ref{turtleprime}) into (\ref{quadoo}), we 
		obtain
		\beq
		\label{quadoo2}
		(\gP_{\ch{T}_\lb}-\gP_{\ch{T}_\lb^\prime}) \ y
		-\lb \left(X_{\ch{T}_\lb} (X_{\ch{T}_\lb}^tX_{\ch{T}_\lb})^{-1} \rho_\lb -X_{\ch{T}_\lb^\prime} (X_{\ch{T}_\lb^\prime}^tX_{\ch{T}_\lb^\prime})^{-1} \rho_\lb^\prime\right)  =  0.
		\eeq
		On the other hand, (\ref{bleugle}) gives 
		\bea
		\label{beugle2}
		0 & = & 
		\ y^t \left(X_{\ch{T}_\lb} (X_{\ch{T}_\lb}^tX_{\ch{T}_\lb})^{-1} \rho_\lb
		- X_{\ch{T}_\lb^\prime}  (X_{\ch{T}_\lb^\prime}^tX_{\ch{T}_\lb^\prime})^{-1} \rho_\lb^\prime  \right) \\
		& & -\lb \left(\rho_{\lb}^t (X_{\ch{T}_\lb}^tX_{\ch{T}_\lb})^{-1}  \rho_\lb -(\rho_\lb^\prime)^t (X_{\ch{T}_\lb^\prime}^tX_{\ch{T}_\lb^\prime})^{-1}  \rho_\lb^\prime \right).\nonumber
		\eea
		Setting
		\bean
		\eta_\lb & = & X_{\ch{T}_\lb} (X_{\ch{T}_\lb}^tX_{\ch{T}_\lb})^{-1} \rho_\lb
		- X_{\ch{T}_\lb^\prime}  (X_{\ch{T}_\lb^\prime}^tX_{\ch{T}_\lb^\prime})^{-1} \rho_\lb^\prime \\
		\zeta_\lb & = & \rho_{\lb}^t (X_{\ch{T}_\lb}^tX_{\ch{T}_\lb})^{-1}  \rho_\lb -(\rho_\lb^\prime)^t (X_{\ch{T}_\lb^\prime}^tX_{\ch{T}_\lb^\prime})^{-1}  \rho_\lb^\prime,
		\eean
		we obtain the system:
		\bea 
		%\label{quadoo2}
		(\gP_{\ch{T}_\lb}-\gP_{\ch{T}_\lb^\prime}) y
		-\lb \eta_\lb & = & 0 \\
		y^t \eta_\lb - \lb \zeta_\lb & = & 0.
		\eea
		Notice that
		\bean
		\left( \gP_{\ch{T}_\lb}-\gP_{\ch{T}_\lb^\prime},\ \eta_\lb ,\ \zeta_\lb \right) 
		& \in & \cal F_1 \times \cal F_2\times \cal F_3,
		\eean
		where
		\bean
		\cal F_1& = & \left\{\gP_{S}-\gP_{S^\prime}, \ S\neq S^\prime\subset\{1,\ldots,n\} \right\} \\
		\cal F_2 & = & \left\{  X_S (X_S^tX_{S})^{-1} \ep_S
		- X_{S^\prime}  (X_{S^\prime}^tX_{S^\prime})^{-1} \ep_{S^\prime},\  (S,S^\prime,\ep_S,\ep_{S^\prime})\in \cal G \right\} \\
		\cal F_3 & = & \left\{ \ep_S^t (X_S^tX_{S})^{-1}  \ep_S - \ep_{S^\prime}^t (X_{S^\prime}^tX_{S^\prime})^{-1}  \ep_{S^\prime}    \  (S,S^\prime,\ep_S,\ep_{S^\prime})\in \cal G \right\},
		\eean
		with 
		\bean
		\cal G & = & \left\{ S\neq S^\prime\subset\{1,\ldots,n\},\ (\ep_S, \ep_{S^\prime}) \in \{-1,1\}^{|S|}\times \{-1,1\}^{|S^\prime|} \right\} .
		\eean
		Therefore, $(y,\lb)$ is a solution of the finite set of equations
		\bea 
		Q \ y -\lb \eta & = & 0 \label{sysquad12} \\
		y^t \eta - \lb \zeta & = & 0, \label{sysquad22}
		\eea
		when $(Q,\eta,\zeta)$ is running over $\cal F_1 \times \cal F_2\times \cal F_3$. This implies that 
		\bean
		\left\{ \left(\ch{\beta}_\lb,\lb \right), \ \lb>0 \right\} & \subset &  \bigcup_{j\in\cal J}E_j,
		\eean
		where $\cal J$ is a finite set and the $E_j\subset \R^{n+1}$ are linear subspaces. 
		
		Let us now show that there is no $E_j$, $j\in\cal J$, containing a subspace of dimension $n$. Let us suppose that this is not the case, i.e. there exist two supports $S\neq S^\prime$ and $(\eta,\zeta)\in \cal F_2\times \cal F_3$ such that for all $y\in\R^n$,
		\bea \label{kerncoroll}
		(\gP_S-\gP_{S^\prime}) y & = & \frac{\eta \eta^t}{\zeta}\ y.
		\eea
		When the rank of $\gP_S-\gP_{S^\prime}$ is different from $1$, (\ref{kerncoroll}) cannot be satisfied for all $y\in\R^n$. 
		Thus, we only have to focus on the case where the rank of $\gP_S-\gP_{S^\prime}$ is $1$, 
		or equivalently, $|S\Delta S^\prime|=1$. We distinguish two cases. 
		Either $W_{S}:=V_{S}^{\perp}\cap V_{S^{\prime}}\neq \{0\}$ or $W_{S}= \{0\}$:
		\begin{itemize}
			\item[(i)] If $W_{S}\neq \{0\}$, take $v\in W_{S}$, $v\neq0$. Then $(\gP_S-\gP_{S^\prime})v=-v$, and the only eigenvalue of $\gP_S-\gP_{S^\prime} $ is $-1$.
			\item[(ii)] If $W_{S}= \{0\}$, then $V_{S^{\prime}} \subset V_{S}$ and so $W_{S^{\prime}}:=V_{S^\prime}^{\perp}\cap V_{S}\neq \{0\}$. Hence, take a non-zero $v\in W_{S^{\prime}}$. We now have $(\gP_S-\gP_{S^\prime})v=v$, and the only eigenvalue of $\gP_S-\gP_{S^\prime} $ is $1$.
		\end{itemize}
		But the only eigenvalue of $\eta \eta^t /\zeta$ is $\|\eta\|^2_2/\zeta$.
		By developing 
		\bean
		\|\eta\|_2^2 & = & \|X_S (X_S^tX_{S})^{-1} \ep_S
		- X_{S^\prime}^t  (X_{S^\prime}^tX_{S^\prime})^{-1} \ep_{S^\prime}\|^{2}  
		\eean
		and comparing with
		\bean
		\zeta & = & \ep_S (X_S^tX_{S})^{-1} \ep_S
		- \ep_{S^\prime}^t  (X_{S^\prime}^tX_{S^\prime})^{-1} \ep_{S^\prime},
		\eean
		we can write that the General Position Condition, Assumption \ref{ass4}, is equivalent to the following inequations:
		\bean 
		\zeta & \neq & 0 \\
		\|\eta\|_2^2 & \neq & |\zeta|.
		\eean  
		Therefore, the operators $\gP_S-\gP_{S^\prime}$ and $\eta \eta^t /\zeta$ are different. Hence, (\ref{kerncoroll})
		is not satisfied for all $y\in \R^n$ when the rank of $\gP_S-\gP_{S^\prime}$ is $1$. \\
		
		As a conclusion, the dimension of $E_j$ is less than $n+1$. the probability that there exists $\lb>0$ such that 
		uniqueness of the LASSO estimator fails, is equal to zero.   
	\end{proof}

	\bibliographystyle{amsplain}
	\bibliography{database}

\end{document}